\newtheorem{theorem}{Theorem}[section]
\newtheorem{lemma}[theorem]{Lemma}
\theoremstyle{definition}
\theoremstyle{remark}
\newtheorem{remark}[theorem]{Remark}
\numberwithin{equation}{section}
\def\bbeta{\ensuremath{\bm{\beta}}}
\newcommand{\undern}{{\bf n}}
\newcommand{\underv}{\mathbf{v}}
\newcommand{\bb}{\mathbf{b}}
\newcommand{\ds}{\displaystyle}
\newcommand{\e}{{\mbox{\tiny E}}}
\newcommand{\E}{{\mbox{\sc E}}}
\newcommand{\dedge}[1]{\delta_\e #1}
\newcommand{\harmbeta}{\mathcal{H}_\e(\bbeta)}
\newcommand{\Nedelec}{N\`{e}d\`{e}lec }
\newcommand{\Reals}[1]{\ensuremath{\rm I\! R}^{#1}}
\begin{document}

\title[Exponential fitting scheme for tensor coefficients]
{An exponential fitting scheme for general convection-diffusion equations on tetrahedral meshes}
\thanks{The authors thank 
the Center of Applied Scientific Computing at Livermore National Laboratory,  University of California, 
for the partial financial support for this work through contract B529214.
}
 
\author{R.~D. Lazarov}
\address{Department of Mathematics,
Texas A \& M University,
College Station, TX 77843, U.S.A.}
\email{lazarov@math.tamu.edu}

\author{L.~T. Zikatanov}
\address{Department of Mathematics,
Pennsylvania State University,
University Park, PA 16802, U.S.A.}
\email{ludmil@psu.edu}

\subjclass{65N30, 65N15}

\date{A version of this note appeared in Computational and Applied
  Mathematics, (Obchysljuval'na ta prykladna matematyka, Kiev) Vol. 1,
  Number 92, pp. 60-69, 2005.}

\begin{abstract}
This paper contains construction and analysis a finite element
approximation for convection dominated diffusion problems with full
coefficient matrix on general simplicial partitions in $\Reals{d}$, $d \geq 2$.  This construction is quite close to the scheme of
Xu and Zikatanov \cite{zikatanov_xu_99} where a diagonal coefficient
matrix has been considered. The scheme is of the class of
exponentially fitted methods that does not use upwind or checking the
flow direction. It is stable for sufficiently small discretization
step-size assuming that the boundary value problem for the
convection-diffusion equation is uniquely solvable. Further, it is
shown that, under certain conditions on the mesh the scheme is
monotone.  Convergence of first order is derived under minimal
smoothness of the solution.
\end{abstract}

\dedicatory{Dedicated to Academician Alexander Andreevich
Samarskii -- a pioneer in numerical analysis, computational mathematics and 
computational physics on the occasion of his 85th birthday}
\maketitle

%%%%%%%%%%%%%%%%%%%%%%%%%%%%%%%%%%%%%%%%%%%%%%%%%%%%%%%%%%%%%%%%%%%%%%%%%%%

\section{Introduction}\label{section:intro}

%%%%%%%%%%%%%%%%%%%%%%%%%%%%%%%%%%%%%%%%%%%%%%%%%%%%%%%%%%%%%%%%%%%%%%%%%%%

We consider the following convection-diffusion-reaction problem: 
Find $u=u(x)$ such that
\begin{equation} \label{basic-problem}
\left \{
 \begin{array}{rll}
   L u \equiv  - \nabla\cdot (D\nabla u + \bb u)+\gamma u
                         &= f &\mbox{ in } ~\Omega, \\[1.5ex]
                       u &= 0            &\mbox{ on } ~\Gamma_D, \\[1.5ex]
     -D (\nabla u +\bb u) \cdot \undern
                         &= g &\mbox{ on} ~\Gamma_N^{in},\\[1.5ex]
       D\nabla u \cdot \undern 
                         &= 0&\mbox{ on} ~\Gamma_N^{out}.
\end{array}
\right.
\end{equation}
Here $\Omega$ is a bounded polygonal domain in $\Reals{d}$, $d=2,3$,
$D=D(x)$ is $d \times d$ symmetric, bounded and uniformly positive
definite matrix in $\Omega$, $\bb^t=(b_1(x), \dots, b_d(x)) $ is a given vector function,
$\undern $ is the unit outer vector normal to $\partial \Omega$,
and $f$ is a given source function. We have also used the notation
$\nabla u$ for the gradient of a scalar function $u$ and $\nabla \cdot \bb$
for the divergence of a vector function $\bb$ in $\Reals{d}$. The boundary of $\Omega$,
$\partial \Omega$, is split into Dirichlet, $\Gamma_D$, and Neumann,
$\Gamma_N$, parts. Further, the Neumann boundary is divided into
two parts: $\Gamma_N =\Gamma_N^{in} \cup \Gamma_N^{out}$, where
$\Gamma_N^{in}=\{ x \in \Gamma_N: \undern(x) \cdot \bb(x) > 0 \}$ and
$\Gamma_N^{out}=\{ x \in \Gamma_N: \undern(x) \cdot \bb(x) \leq 0 \}$.
We assume that $\Gamma_D$ has positive surface measure. 
The  case $D(x)=\epsilon I$,
where $I$ is the identity matrix in $\Reals{d}$  and $\epsilon > 0$ is a small
parameter, corresponds to the important and difficult class of 
isotropic singularly perturbed convection-diffusion problems.

Various generalizations have wide practical applications.
For example, $\gamma u$ could be replaced by nonlinear reaction term $\gamma(u)$ or
the linear convective flux $\bb u$ could be replaced by a nonlinear advection
flux $\bb(u)$. Finally, $u$ could be a vector function describing the concentration
of various chemicals or biological components so that (\ref{basic-problem})
is a system of equations coupled through the absorption/reaction term.
Now $\gamma $ is a matrix 
that models the chemical reactions or the biological interaction of the components.
All these cases give rise to mathematical problems of convection dominated 
processes with possibly anisotropic diffusion.

Our study of numerical method for solving (\ref{basic-problem}) is motivated by the fact 
that the above problem  
is the simplest model of transport and dispersion of a passive contaminant in 
porous media. If the pressure $p(x)$ in the aquifer is known 
(or already has been computed by solving a standard diffusion problem)
then the pressure gradient forces the ground water to flow.
%scaled by the permeability $A(x)$
%pressure gradient, $\underv=  A(x) \nabla p$, forces the ground water to flow.
The transport of a contaminant dissolved in the water,
is described by the dispersion-reaction equation (\ref{basic-problem}), 
where $u(x)$ represents the contaminant concentration, 
%$\bb$ is the Darcy velocity  
$\bb = \underv =  A \nabla p$ is the Darcy velocity
(up to a sign), $A$ is the  permeability of the porous media, 
$\gamma $ is the bio-degradation/absorption 
rate, and $D(x)$ is the diffusion-dispersion matrix given by
\begin{equation}\label{D-projection}
  D(x) = k_{d} I + k_t\bb \bb^t/|\bb| + k_l( |\bb|I -\bb \bb^t/|\bb|). 
%\underv \underv^t/|\underv| + k_l( |\underv|I -\underv \underv^t/|\underv|).
\end{equation}
Here $k_{d}$, $k_t$, and $k_l$ are coefficients of diffusion, transverse 
dispersion, and longitudinal dispersions, respectively (cf. \cite{dagan}). 
In dispersive underground flows $k_t > k_l$ which implies that $D(x)$ is
positive definite matrix, but possibly ill-conditioned. This problem exhibits 
all difficulties associated with this class: monotone solutions
that are highly localized due to internal and boundary layers, 
material heterogeneities and orthotropy, complex geometry, etc.
%boundary and internal layers, material heterogeneities and orthotropy, etc.

Among the deficiencies of the standard finite element, finite volume,
and finite difference approximations are loss of monotonicity, so that
the numerical solution often exhibits non-physical oscillations, loss
of solvability of the resulting algebraic problem, poor local
resolution, fast dissipation of the energy, etc.  A.~A.~Samarskii was
one of the first to encounter the difficulties that arise in the
numerical solution of such problems.  In the early 60-ies
A.~A.~Samarskii addressed most of the issues for one-dimensional
problems that resulted in a new scheme described in his monograph
\cite[Chapter 4]{samarskii_book}.

In the past 40 years many special approximation techniques have been
developed for multidimensional problems, for structured and
unstructured grids, for general second order elliptic operators, etc.
These techniques include monotone and upwind finite difference, finite
volume, and finite element methods (e.g. ~ \cite{RBank_DRose_1987},
\cite{carstensen_lazarov_tomov},
\cite{LDurlofsky_BEngquist_SOsher_1992}, \cite{samarskii_book},
\cite{sam_monotone}, \cite{sam_transport}, \cite{sam_triangular}, and
\cite{tabata_77}),
streamline diffusion stabilization of the finite element method
(e.g. ~ %\cite{brezzi_russo_94},
\cite{RBank_JBuerger_WFichtner_RSmith_1990}, \cite{brezzi_russo_94},
\cite{ABrooks_THughes_1982}, \cite{THughes_1995}, and
\cite{CJohnson_1987a}), and special functional spaces setting
(e.g. ~\cite{canuto_tabaco_01} and \cite{sangalli_04}).  For more
information regarding numerical methods and analytical techniques in
solving and studying convection-diffusion equations, especially
convection dominated problems, we refer to the monograph of Ross,
Stynes and Tobiska \cite{HRoos_MStynes_LTobiska_1996}.
 
On a continuous level many convection-diffusion satisfy maximum
principle.  This is a desirable property of the solution of the
resulting discrete problem as well. Scheme that satisfies maximum
principle is often called {\it monotone scheme}. Among the several
aforementioned schemes, upwind schemes are often monotone provided
that the coefficient matrix $D(x)$ is diagonal.  In the recent works
\cite{sam_monotone}, \cite{sam_transport} Samarskii and his co-workers
were able to derive monotone schemes on rectangular meshes for the
problem (\ref{basic-problem}) when $D(x)$ is a full matrix.  These
schemes are second order accurate on uniform meshes and solution in
$C^3$.
 
The idea of construction of monotone schemes for singularly perturbed 
convection-diffusion problem goes back to the
work by Scharfetter and Gummel \cite{S-Gummel}, where the monotonicity
%method in numerical semiconductor device modeling 
has been a very desired property in numerical semiconductor device
modeling.  Exponentially fitted scheme for a general
convection-diffusion problem with diagonal matrix $D(x)$ on an
arbitrary simplicial mesh was derived and studied
in~\cite{PMarkowich_MZlamal_1989a},
\cite{FBrezzi_LMarini_PPietra_1989a},
\cite{FBrezzi_LMarini_PPietra_1989b}, \cite{LMarini_PPietra_1990},
\cite{JJHMiller_SWang_CWu_1988}, \cite{JJHMiller_SWang_1991}. As we
mentioned, the technique we use here is a generalization of
\cite{zikatanov_xu_99} for general meshes and is also related to 
the methods proposed in \cite{Ludmil_1992} (see also~\cite{Ludmil_1991} 
for simple diode simulation using such scheme).
Under mild conditions on the mesh (the partition has to satisfy
certain angle condition) it has been shown that the scheme is
monotone. Further, in \cite{zikatanov_xu_99} it was proved that the
scheme converges with first order provided that the solution $u \in
W^{1,p}$ and the flux $D (x)\nabla u + \bb(x) u \in (W^{1,p})^d$ for $
p>d$.  Note, that these are very mild conditions on smoothness of the
solution of problem (\ref{basic-problem}).

The aim of this note is to construct an exponentially fitted finite
element approximation of (\ref{basic-problem}) on general simplicial 
partitions, for symmetric positive definite matrices $D(x)$, and for arbitrary 
vector-functions $\bb$. 
The proposed scheme is a generalization of the discretization 
derived in \cite{zikatanov_xu_99}, \cite{Ludmil_1992} for problems with 
diagonal matrices $D(x)$.
Important role in the construction and the analysis 
plays the expansion of a constant over each element vector-flux using the 
lowest order \Nedelec{} basis for simplicial finite elements. This allows to 
present the bilinear form
through differences of the vertex values of the test functions
and the exponentially weighted local solution. This  
representation ensures the consistence of the method.

The scheme has several interesting
features. It is a finite element scheme with a standard variational
formulation (but with a modified bilinear form); 
it does not use explicitly the standard upwind techniques, such
as checking the flow directions; it can be applied to very general
unstructured grid in any spatial dimension. 
It would be difficult to expect that in such generality 
the scheme will be monotone. Nevertheless, %for monotonicity of the scheme
we were able to find conditions that involve the geometry of the finite 
elements in a metric associated with $D$ so that the scheme is monotone. Further, for
sufficiently small step-size of the finite element partition we prove 
existence and uniqueness of the solution of the discrete problem 
by using a fundamental result of Schatz \cite{ASchatz_1974}. 

The paper is organized as follows. In Section \ref{section:notation} we introduce the 
necessary notation for Sobolev spaces, finite element partition and the
discrete space. Section \ref{section:scheme} contains the main results of the paper. 
In Subsection \ref{section:prelim} we present the rationale used in 
derivation of exponentially fitting finite element scheme.
An important concept here is the edge based interpolation of the total flux that 
uses an ordinary differential equation along the edge. In Subsection
\ref{section:discretization} we present the scheme itself as a consequence of this
special interpolation. The main result here is contained in Lemma \ref{lemma:auxiliary} 
where certain properties of the discrete bilinear form are obtained. Finally, in
Subsection \ref{section:well_posed} we prove the stability of the scheme for sufficiently small 
mesh-size  and derive an  estimate for the error under minimal smoothness of the solution.

%%%%%%%%%%%%%%%%%%%%%%%%%%%%%%%%%%%%%%%%%%%%%%%%%%
%%%%%%%%%%%%%%%%%%%%%%%%%%%%%%%%%%%%%%%%%%%%%%%%%%
 
%\section{Preliminaries}\label{prelim}
\section{Notation}\label{section:notation}
 
%%%%%%%%%%%%%%%%%%%%%%%%%%%%%%%%%%%%%%%%%%%%%%%%%%
%%%%%%%%%%%%%%%%%%%%%%%%%%%%%%%%%%%%%%%%%%%%%%%%%%

In this section, we introduce the necessary notation and describe some
basic properties of finite element partitions and finite element spaces. 
 
We denote by $L^p(K)$, $1\leq p \leq \infty$ the space of
$p$-integrable real-valued functions
over $K \subset \Omega$ (with the usual modification for $p=\infty$), 
by $(\cdot,\cdot)_K$ and $~|| \cdot ||_K$, respectively, the inner product
and the norm in $L^2(K)$. Further $|\cdot|_{1,p,K}$ and $||\cdot||_{1,p,K}$, respectively
denote the semi-norm and norm of the Sobolev space $W^{1,p}(K)$. For $p=2$ we use 
$ H^1(K) := W^{1,2}(K)$ and if $K=\Omega$  often we suppress the index $K$ so that
$(\cdot, \cdot)_{\Omega}:=(\cdot,\cdot)$ and $~\| \cdot \|_{\Omega} :=\|\cdot\|$,
and $~\| \cdot \|_{1, \Omega} :=\|\cdot\|_1$.
Further, we use the Hilbert space
$$H_D^1(\Omega) = \{ v \in H^1(\Omega): ~v|_{\Gamma_D} = 0\}.$$
%Finally, we denote by $H^{1/2}(\partial K)$ the space of the traces of
%functions in $H^1(K)$ on the boundary $\partial K$.

We introduce the bilinear form $a(\cdot, \cdot)$ defined on
$H_D^1(\Omega) \times H_D^1(\Omega)$:
\begin{equation}\label{bilinear-form}
   a(u, v)  :=
     (D \nabla u + \bb u, \nabla v) + (\gamma u, v) -
             \int_{\Gamma_N^{out}} \bb \cdot \undern ~u ~v \, d s .
\end{equation}
Then (\ref{basic-problem}) has the following weak form:
Find $ u \in H_D^1(\Omega)$ such that
\begin{equation}\label{equation:weak}
   a(u, v)=F(v) := (f,v) + \int_{\Gamma_N^{in}}gv\, ds\quad\mbox{for all}
    ~~v \in H_D^1(\Omega).
\end{equation}
Further in the paper we assume that the following inf-sup condition is valid:
there is a constant $c_0 > 0$, such that
\begin{equation}\label{inf-sup}
% \alpha_0 \| u \|_{1} \leq 
\sup_{v \in H^1_D(\Omega)} \frac{a(w,v)}{\|v\|_1} \geq c_0 \| w \|_{1}, 
           \quad \forall w \in H^1_D(\Omega).
\end{equation}
We shall also assume that the bilinear form $ a(w, v)$ is bounded on  $H_D^1(\Omega)$
and the linear form $F(v)$ is continuous in $H_D^1(\Omega)$.
Then the above problem has unique solution (cf. \cite{Gilbarg_Trudinger}). 

\begin{remark}
A sufficient condition for (\ref{inf-sup}) and continuity of $ a(u, v)$ and 
$F(v)$ are, for example, 
$ \gamma(x) + 0.5 \nabla \cdot \bb(x) \geq 0$ for all
$x \in \Omega$, boundedness of the coefficients $D(x)$, $\bb(x)$, and 
$\gamma(x)$ in $\Omega$.
\end{remark}

Let $\mathcal{T}_h$ be a family of simplicial finite element
triangulations of $\Omega$ that are shape regular and satisfy the
usual conditions (see \cite[Chapter 2]{PCiarlet_1978}).  For simplicity of
the exposition, we assume that the triangulation covers $\Omega$ exactly.
Associated with each $\mathcal{T}_h$, let $V_h \subset H^1_D(\Omega)$ be
the finite element space of piece-wise linear functions. 
By $v_I \in V_h$ we denote the standard
finite element Lagrange interpolant which assumes the values of 
%a given smooth  function 
$v \in C^0$ at  the vertexes in the partition  $\mathcal{T}_h$.
 
Given $T\in \mathcal{T}_h$, we introduce the following notation. By $q_j$, 
$j=1,\dots,4 $ we denote the vertices of $T$, $\E$ is
the edge connecting two vertices $q_i$ and $q_j$,
%(the indexes $i,j$ will be clear from the context), 
 $\dedge{\phi} = \phi(q_i)-\phi(q_j)$ for any continuous function
$\phi$ on $\E$, and  
$\tau_\e=\dedge \, x = q_i-q_j$ is a directional vector of $\E$ (not assumed unitary).

%%%%%%%%%%%%%%%%%%%%%%%%%%%%%%%%%%%%%%%%%%%%%%%%%%%%%%%%%%%%%%%%%%%%%%%%%%
%%%%%%%%%%%%%%%%%%%%%%%%%%%%%%%%%%%%%%%%%%%%%%%%%%%%%%%%%%%%%%%%%%%%%%%%%%

\section{Exponential fitting scheme for general convection-diffusion equations}
\label{section:scheme}

%%%%%%%%%%%%%%%%%%%%%%%%%%%%%%%%%%%%%%%%%%%%%%%%%%%%%%%%%%%%%%%%%%%%%%%%%%
%%%%%%%%%%%%%%%%%%%%%%%%%%%%%%%%%%%%%%%%%%%%%%%%%%%%%%%%%%%%%%%%%%%%%%%%%%

\subsection{Preliminaries}\label{section:prelim}

%%%%%%%%%%%%%%%%%%%%%%%%%%%%%%%%%%%%%%%%%%%%%%%%%%%%%%%%%%%%%%%%%%%%%%%%%%

Introduce a notation for the scaled flux
\begin{equation}\label{flux}
J(u)= \nabla u + \bbeta(x) u, \qquad \bbeta=D^{-1} \bb.
\end{equation}
We will assume further that $J(u)\in [W^{1,p}(\Omega)]^d$,  $p>n$,
$D, ~D^{-1} \in [W^{1,\infty}(\Omega)]^{d\times d}$ and $\bb\in
W^{1,\infty}(\Omega)$. These assumptions on the coefficients smoothness can be
relaxed to hold element-wise (i.e. for each $T\in \mathcal{T}_h$) and
the considerations below will still hold with changes of some of the
norms used in the error estimate to be taken element-wise as well.  

The basic idea which we use in the construction of the exponentially
fitted scheme is to approximate the flux vector $J(u)$ with a constant
vector field $J_T(u)$ on each element $T$ of the partition
$\mathcal{T}_h$.
Apparently, if $J_T(u)$ is a constant on each simplex, then we can
expand it
using the \Nedelec{} basis as follows:
$$
J_T=\sum_{E\in T} J_T\cdot \tau_\e \, \varphi_\e(x).  
$$
Here $\varphi_\e$ are the \Nedelec{} basis functions, which in terms
of the barycentric coordinates $\lambda_i$ are given by
$$
\varphi_\e:=\lambda_i\nabla\lambda_j-
\lambda_j\nabla\lambda_i, \quad \E=(q_i, q_j).
$$
The goal then is to write out $J_T(u)\cdot \tau_\e$ in
terms of $u$, for all edges $\E$ and thus determine the
approximation.  
To find the moments of the tangential flux, we
use the same technique as in \cite{zikatanov_xu_99}.
Let $u \in H^1_0(\Omega) \cap
C^0(\bar{\Omega})$.  Consider an edge $\e\subset\partial T$. Taking the Euclidean inner product with $\tau_\e$ we obtain
$$
\ds (\nabla u \cdot \tau_\e) +
(\bbeta \cdot\tau_\e) u =
(J(u) \cdot \tau_\e).
$$
A change of variables in this ordinary differential
equation then gives:
\begin{equation}\label{prebasic}
\ds e^{-\psi_\e} \partial_\e (e^{\psi_\e} u) =
%\frac{\partial (e^{\psi_\e} u)}{\partial \tau_\e} =
\frac{1}{|\tau_\e|}(J(u) \cdot \tau_\e), \quad \mbox{where}\quad
\partial_\e \psi_\e =
%\frac{\partial\psi_\e}{\partial\tau_\e} =
\frac{1}{|\tau_\e|}(\bbeta\cdot\tau_\e)
\end{equation}
and $\partial_\e v:= \nabla v \cdot \tau_\e/|\tau_\e|$ is the 
directional derivative along the edge $\E$.
After integration over $\E$ we obtain that
\begin{equation*}
\ds \dedge(e^{\psi_\e}u) =
\frac{1}{|\tau_\e|}
\int_\e
e^{\psi_\e}
(J(u)\cdot\tau_\e) ds.
\end{equation*}
Let $\harmbeta$ be the harmonic average of $e^{\psi_\e}$
over $\E$ defined as follows:
\begin{equation}\label{harm}
\ds \harmbeta = \left[\frac{1}{|\tau_\e|}
\int_{\e}e^{\psi_\e} ds \right]^{-1}.
\end{equation}
The constant approximation $J_T$ is then obtained by using the mean
value theorem $J^*\cdot\tau_\e\int_\e e^{\psi_\e}\;ds = \int_\e
J\cdot\tau_\e e^{\psi_\e}\;ds$, and the definition then is
\begin{equation*}
\ds J_T(u)\cdot\tau_\e :=
J^*\cdot\tau_\e=
\harmbeta \dedge(e^{\psi_\e}u).
\end{equation*}

%%%%%%%%%%%%%%%%%%%%%%%%%%%%%%%%%%%%%%%%%%%%%%%%%%%%%%%%%%%%%%

\subsection{Discrete problem}\label{section:discretization}

%%%%%%%%%%%%%%%%%%%%%%%%%%%%%%%%%%%%%%%%%%%%%%%%%%%%%%%%%%%%%%

We now have all the ingredients needed to define the discrete
approximation to \eqref{equation:weak}.  Based on the above
considerations, we shall define two approximate bilinear forms. The first one
is used in the formulation of the discrete problem and the second is
used in an intermediate step needed to prove the error estimate.

On a fixed element $T\subset \mathcal{T}_h$, we first introduce
\begin{equation}\label{bilineara}
\ds a_{h,T}(u_h,v_h) = \sum_{\e\subset\partial T} \omega^T_\e(D) \harmbeta
\dedge(e^{\psi_\e}u_h) \dedge v_h,
\end{equation}
where
$$
\omega^T_\e(D) = - \int_T D\nabla\lambda_i\cdot\nabla \lambda_j\;dx,
\quad \E=(q_i,q_j).
$$
Note, that $\omega^T_\e(D)$ give the element stiffness matrix for 
the diffusion part of the differential equation, $-\nabla \cdot (D(x) \nabla u)$.

Next, we use the expansion via the \Nedelec{} basis functions, to
define
\begin{equation}\label{bilinearb}
\ds b_{h,T}(u_h,v_h) = \sum_{\e\subset\partial T} \harmbeta
\dedge(e^{\psi_\e}u_h)
\int_T D\varphi_\e\cdot \nabla v_h\;dx.
\end{equation}
The global bilinear form is then obtained by summing over all elements
of the triangulation the local forms (\ref{bilineara}) and adding the 
contributions from the boundary $\Gamma_N^{out}$, as follows:
\begin{equation}\label{bilinear}
\ds a_{h}(u_h,v_h) =  
\sum_{T\in\mathcal{T}_h} a_{h,T}(u_h,v_h) + \int_\Omega \gamma u_h v_h dx - 
     \sum_{E \subset \Gamma_N^{out}} \int _{E} \bb \cdot \undern \, u_h v_h ds.
\end{equation}
Finally, the finite element approximation of the problem \eqref{basic-problem}
reads as follows: Find
$u_h\in V_h$ such that
\begin{equation}\label{equation:discrete-problem}
a_h(u_h,v_h)=F(v_h), \quad \mbox{for all} \quad v_h\in V_h.
\end{equation}

The following lemma is the main tool used in the analysis of the above scheme.
%error analysis.
%
\begin{lemma}\label{lemma:auxiliary} 
The following relations hold for any $v_h\in V_h$:
\begin{enumerate}
\item[1.] If $w \in C(\overline T)$ then
\begin{equation}\label{item 1}
b_{h,T}(w_I,v_h) =
\ds
\sum_{\e\subset\partial T}
\left[\frac{\harmbeta}{|\tau_\e|}
\int_{\e}{e^{\psi_\e}}
J(w)\cdot \tau_\e ds\right] \int_T D\varphi_\e\cdot
\nabla v_h;
\end{equation}
\item[2.] If $J_T$ is a constant vector on $T$, then for any $v_h\in V_h$
\begin{equation}\label{item 2}
\sum_{\e\subset\partial T} J_T\cdot \tau_\e\int_T D \varphi_\e\cdot\nabla v_h\;dx =
\sum_{\e\subset\partial T} \omega^T_\e(D) J_T \cdot \tau_\e \dedge v_h;
\end{equation}
\item[3.] If $w\in C(\overline T)$ and $J(w)\in [W^{1,p}(T)]^n$, $p>d$, then the
following inequality holds for every $v_h \in V_h$ and $T \in {\mathcal T}_h$
\begin{equation}\label{item 3}
|a_T(w,v_h)-a_{h,T}(w_I,v_h)| \leq
C h  %\left\{
|J(w)|_{1,p,T} %^2\right\}^{1/2}
\|v_h\|_{1, T}.
\end{equation}
where 
$$
a_T (w, v_h)=\int_T (D \nabla w + \bb w) \cdot \nabla v_h \, dx.
$$
%is the bilinear form from \eqref{equation:weak}, restricted to $T$.
\end{enumerate}
\end{lemma}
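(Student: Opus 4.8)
The plan is to prove the three items in turn; items~1 and~2 are direct algebra, while item~3 combines them with an elementwise approximation estimate for the scaled flux $J(w)$.

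\emph{Item~1.} I would use only the fundamental theorem of calculus along the edge $\E$. Since the Lagrange interpolant satisfies $w_I(q_k)=w(q_k)$ at the endpoints of $\E$, one has $\dedge(e^{\psi_\e}w_I)=\dedge(e^{\psi_\e}w)$. Differentiating $e^{\psi_\e}w$ along $\E$ and using $\partial_\e\psi_\e=|\tau_\e|^{-1}(\bbeta\cdot\tau_\e)$ from \eqref{prebasic} gives $\partial_\e(e^{\psi_\e}w)=|\tau_\e|^{-1}e^{\psi_\e}\,J(w)\cdot\tau_\e$, hence after integration over $\E$, $\dedge(e^{\psi_\e}w_I)=|\tau_\e|^{-1}\int_{\e}e^{\psi_\e}J(w)\cdot\tau_\e\,ds$. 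Inserting this into the definition \eqref{bilinearb} of $b_{h,T}$ is exactly \eqref{item 1}; the only regularity used is a trace of $J(w)\cdot\tau_\e$ on $\E$, which is available under the hypotheses of item~3 (indeed $J(w)\in W^{1,p}(T)$, $\bbeta\in W^{1,\infty}$ and $w\in C(\overline T)$ force $\nabla w=J(w)-\bbeta w$ to be continuous up to $\partial T$).

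\emph{Item~2.} I would invoke the \Nedelec reconstruction identity stated in the text, $\sum_{\e\subset\partial T}(J_T\cdot\tau_\e)\varphi_\e\equiv J_T$ for constant $J_T$, which turns the left side of \eqref{item 2} into $\int_T DJ_T\cdot\nabla v_h\,dx$. Letting $\phi$ be the affine function on $T$ with $\nabla\phi=J_T$, so that $\dedge\phi=\nabla\phi\cdot\tau_\e=J_T\cdot\tau_\e$, the classical edge representation of the element stiffness matrix, $\int_T D\nabla\phi\cdot\nabla v_h\,dx=\sum_{\e\subset\partial T}\omega^T_\e(D)\,\dedge\phi\,\dedge v_h$ (valid for piecewise-affine $\phi,v_h$ and variable $D$, and proved from $\sum_k\nabla\lambda_k=0$ and $\omega^T_\e(D)=-\int_T D\nabla\lambda_i\cdot\nabla\lambda_j$), is precisely the right side of \eqref{item 2}.

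\emph{Item~3.} Since $D\nabla w+\bb w=DJ(w)$, we have $a_T(w,v_h)=\int_T DJ(w)\cdot\nabla v_h\,dx$. Put $J_0:=|T|^{-1}\int_T J(w)$, a constant vector on $T$. By item~2 applied to $J_0$, $\int_T DJ_0\cdot\nabla v_h\,dx=\sum_{\e\subset\partial T}\omega^T_\e(D)(J_0\cdot\tau_\e)\,\dedge v_h$, while by definition $a_{h,T}(w_I,v_h)=\sum_{\e\subset\partial T}\omega^T_\e(D)\,c_\e\,\dedge v_h$ with $c_\e:=\harmbeta\dedge(e^{\psi_\e}w_I)$. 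Subtracting,
\begin{equation*}
a_T(w,v_h)-a_{h,T}(w_I,v_h)=\int_T D(J(w)-J_0)\cdot\nabla v_h\,dx+\sum_{\e\subset\partial T}\omega^T_\e(D)\,(J_0\cdot\tau_\e-c_\e)\,\dedge v_h .
\end{equation*}
Because $J_0$ is the mean of $J(w)$, the Poincar\'e inequality bounds the first term by $Ch\,|J(w)|_{1,p,T}\|v_h\|_{1,T}$ (with a higher power of the element diameter to spare, since $p>d\ge 2$). For the second term I would estimate each edge contribution factor by factor: shape regularity gives $|\omega^T_\e(D)|\le Ch_T^{d-2}$; since $v_h$ is affine on $T$, $\dedge v_h=\nabla v_h\cdot\tau_\e$ there, so $|\dedge v_h|\le|\tau_\e|\,\|\nabla v_h\|_{L^\infty(T)}\le Ch_T^{1-d/2}\|\nabla v_h\|_{0,T}$ by an inverse estimate; and, using item~1 together with $\harmbeta\,|\tau_\e|^{-1}\int_{\e}e^{\psi_\e}\,ds=1$ from \eqref{harm}, one gets $c_\e-J_0\cdot\tau_\e=\harmbeta\,|\tau_\e|^{-1}\int_{\e}e^{\psi_\e}(J(w)-J_0)\cdot\tau_\e\,ds$, a weighted average with positive weights, whence $|c_\e-J_0\cdot\tau_\e|\le|\tau_\e|\,\|J(w)-J_0\|_{L^\infty(T)}\le Ch_T^{2-d/p}|J(w)|_{1,p,T}$ by Morrey's inequality $W^{1,p}(T)\subset C(\overline T)$ (this is where $p>d$ is used). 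The product of the three factors is $Ch_T^{1+d(1/2-1/p)}|J(w)|_{1,p,T}\|\nabla v_h\|_{0,T}$; since $p>d\ge 2$ the exponent is $\ge 1$, so using $h_T\le h$ the second term is also $\le Ch\,|J(w)|_{1,p,T}\|v_h\|_{1,T}$. Adding the two estimates gives \eqref{item 3}.

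The main obstacle is the power-of-$h$ bookkeeping in item~3: the edge coefficients $\omega^T_\e(D)$ are only $O(h_T^{d-2})$ and $\dedge v_h$ forces a factor $h_T^{1-d/2}$ which is a negative power for $d\ge 3$, so the $O(h)$ consistency survives only because the local flux oscillation $\|J(w)-J_0\|_{L^\infty(T)}$ contributes the extra power $h_T^{1-d/p}$ — and that estimate, together with the well-posedness of all edge traces, is exactly what requires $p>d$. Items~1 and~2 are routine by comparison; the only thing worth checking there is that $\harmbeta$ is a genuine (positive) harmonic average of $e^{\psi_\e}$, which is immediate.
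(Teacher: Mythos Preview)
Your arguments for items~1 and~2 are correct and match the paper's: item~1 is exactly the edge identity derived in the preliminaries, and your item~2 is the same affine-potential/\Nedelec{} reconstruction argument the paper gives (with $\Phi=J_T\cdot x$).

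For item~3 your proof is correct but follows a genuinely different route from the paper. The paper inserts the intermediate form $b_{h,T}$, writes
\[
a_T(w,v_h)-a_{h,T}(w_I,v_h)=\bigl[a_T-b_{h,T}\bigr]+\bigl[b_{h,T}-a_{h,T}\bigr]=:\mathcal{E}_1+\mathcal{E}_2,
\]
maps both error functionals to the reference element, observes (via items~1 and~2) that each $\mathcal{E}_i$ vanishes whenever $J(w)$ is constant on $T$, and then invokes the Bramble--Hilbert lemma together with the Sobolev embedding $W^{1,p}\hookrightarrow L^\infty$ (this is where $p>d$ enters) to obtain \eqref{item 3} after scaling back. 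You instead subtract the element mean $J_0$ of $J(w)$ directly, use item~2 to pass $\int_T DJ_0\cdot\nabla v_h$ to edge form, and then estimate the two remainders by Poincar\'e--Wirtinger and Morrey's inequalities combined with explicit shape-regularity bounds on $\omega^T_\e(D)$ and $\dedge v_h$. Your approach is more elementary (no reference-element scaling, no Bramble--Hilbert) and makes transparent both the $\bbeta$-independence of the constant---since the weighted edge average has total weight one---and the extra power $h_T^{d(1/2-1/p)}$ that is gained; the paper's approach is shorter and avoids tracking the individual powers of $h_T$, at the cost of hiding the mechanism inside Bramble--Hilbert. Both arguments use $p>d$ in the same essential place, namely to control $J(w)$ in $L^\infty(T)$.
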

\begin{proof}
The proof of~1. follows directly from the derivation.

The proof of 2. can be done as follows: Consider $\Phi:=J_T\cdot x$
for $x \in T$ (here $x$ is the vector of coordinates in
$\Reals{d}$). It is obvious that $\Phi$ is linear and that
$\nabla\Phi = J_T$ and hence $\nabla\Phi\cdot \tau_\e = J_T\cdot
\tau_\e$.  
We now use the fact that the \Nedelec{} canonical interpolation 
$\Pi_{\mathcal{N}} J_T = \sum_{\e\in T} (J_T \cdot\tau_\e) \varphi_\e$
satisfies the commutativity property $\Pi_{\mathcal{N}}\nabla\Phi =
\nabla \Phi_I = \nabla \Phi$.  Therefore,
\begin{eqnarray*}
\sum_{\e\subset\partial T} J_T\cdot \tau_\e\int_T D \varphi_\e\cdot\nabla
v_h\;dx &=& 
\int_T D \left[
\sum_{\e\subset\partial T} J_T\cdot \tau_\e \varphi_\e\right]\cdot\nabla
v_h\;dx \\
 & = & \int_T D [\Pi_{\mathcal{N}} J_T]\cdot\nabla v_h\;dx\\
 & = & \int_T D[\Pi_{\mathcal{N}} J_T]\cdot\nabla v_h\;dx
=  \int_T D [\Pi_{\mathcal{N}} \nabla\Phi]\cdot\nabla v_h\;dx\\
&=&  \int_T D [\nabla\Phi]\cdot\nabla v_h\;dx
= \sum_{\e\subset\partial T} \omega^T_\e(D) J_T \cdot \tau_\e \dedge v_h,
\end{eqnarray*}
and the proof of~2. is complete

To prove~3. 
we use \eqref{item 1} and split the difference in the following way
\begin{equation}
\ds a_T(w,v_h)-a_{h,T}(w_I,v_h) =\mathcal{E}_1(J(w),v_h) + \mathcal{E}_2(J(w),v_h)
\end{equation}
where
\begin{equation*}
\ds \mathcal{E}_1(J(w),v_h)=a_T(w,v_h)-b_{h,T}(w_I,v_h),
%\quad\mbox{and}\quad
%\mathcal{E}_2(J,v_h)=b_{h,T}(w,v_h)-a_{h,T}(w_I,v_h).
\end{equation*}
and 
\begin{equation*}
\mathcal{E}_2(J(w),v_h)=b_{h,T}(w_I,v_h)-a_{h,T}(w_I,v_h).
\end{equation*}
From the relations and item~1.  we can expand the forms
$a_T(w,v_h)$, $a_{h,T}(w_I,v_h)$ and $b_{h,T}(w_I,v_h)$ to get that
\begin{equation}
\begin{array}{rcl}
\ds \mathcal{E}_1(J(w),v_h)& = &
\ds \int_{T} J(w) \cdot \nabla v_h dx \\[2.5ex]
& - &
\ds \sum_{\e\subset\partial T}
\left[\frac{\harmbeta}{|\tau_\e|}
\int_{\e}{e^{\psi_\e}}
J(w)\cdot \tau_\e ds\right]\; \,
%\\[2.5ex] & & \ds \times 
\;\int_T D\varphi_E\cdot\nabla v_h\;dx
\end{array}
\end{equation}
and
\begin{equation}
\begin{array}{rcl}
\ds \mathcal{E}_2(J(w),v_h)& = &
\ds \sum_{\e\subset\partial T}
\left[\frac{\harmbeta}{|\tau_\e|}
\int_{\e}{e^{\psi_\e}}
J(w)\cdot \tau_\e ds\right] \int_T D\varphi_E\cdot\nabla v_h\;dx\\[3.5ex]
& & -
\ds \sum_{\e\subset\partial T}
\omega_\e^T(D) \left[\frac{\harmbeta}{|\tau_\e|}
\int_{\e}{e^{\psi_\e}}
J(w)\cdot \tau_\e ds\right] \dedge{v_h}.
\end{array}
\end{equation}
A change of variable from the standard reference element $\widehat T$
(of unit size)
to $T$: $x = B \hat{x} + b_0$ and $w(x)= \hat w(\hat x) $
gives the following scaled bilinear forms
\begin{eqnarray*}
a_T(w,v_h)
& = &  
|{\rm det} B|
\int_{\hat{T}} (
B^{-1} \hat{D}\widehat{J(w)}
\cdot\nabla \hat{v}_h)\;d\hat{x}\\
a_{h,T}(w,v_h)
& = &
\sum_{\hat{\e} \subset \hat{T}}
\omega_\e^T(D)
\left[\frac{\harmbeta}
{|\tau_{\hat{\e}}|}
\int_{\hat{\e}}{e^{\widehat{\psi_\e}}}
(\widehat{J(w)}\cdot \tau_{\hat{\e}})
d\hat{s}\right]\dedge{\hat{v}_h}\\
b_{T,h}(w,v_h)
& = &
|{\rm det} B|
\sum_{\hat{\e} \subset \hat{T}}
\left[\frac{\harmbeta}
{|\tau_{\hat{\e}}|}
\int_{\hat{\e}}{e^{\widehat{\psi_\e}}}
(\widehat{J(w)}\cdot \tau_{\hat{\e}})
d\hat{s}\right]\; \\
 & & \qquad \qquad \;\times \int_{\hat{T}} (
B^{-1} \hat{D}\hat{\varphi}_{\hat{\e}}
\cdot\nabla \hat{v}_h)\;d\hat{x}.
\end{eqnarray*}
By our assumptions on the smoothness of $J(w)$, the corresponding
error functionals 
$\hat{ \mathcal{E} }_i(\widehat{ J(w) },\hat{ v}_h)$, $i=1,2$,  
can be appropriately bounded:
\begin{equation}\label{equation:bound-hat}
\hat{ \mathcal{E} }_i(\widehat{ J(w) },\hat{ v }_h)
\le
%\|\hat{v}_h\|_{1,\hat{T}}
C_i \|\widehat{J(w)}\|_{0,\infty,\hat T}\|\hat{v}_h\|_{1,\hat{T}},
\end{equation}
where $C_i$ might depend on $D$, but do not depend on
$\bbeta$.
By the Sobolev inequality we have that
$$
\|\widehat{J(w)}\|_{0,\infty,\hat T} \leq C
\|\widehat{J(w)}\|_{1,p,\hat T}, ~p > d.
$$ We observe that from \eqref{item 1} and \eqref{item 2}, it follows
that $\mathcal{E}_{i}(J(w),v_h) = 0$ if $J(w)$ is a constant on $T$.
By applying the Bramble-Hilbert Lemma on $\widehat{T}$, and scaling
back to $T$ we obtain the desired result:
\begin{equation}\label{localestimate}
|\mathcal{E}_i(J(w),v_h)| \leq  C h|J(w)|_{1,p,T} |v_h|_{1,T}, \quad i=1,2.
\end{equation}
\end{proof}

%%%%%%%%%%%%%%%%%%%%%%%%%%%%%%%%%%%%%%%%%%%%%%%%%%%%%%%%%%%%%%%%%%%%

\subsection{Solvability of the discrete problem and error estimate}
\label{section:well_posed}

%%%%%%%%%%%%%%%%%%%%%%%%%%%%%%%%%%%%%%%%%%%%%%%%%%%%%%%%%%%%%%%%%%%%

In this paragraph we state two lemmas related to the solvability of
the problem and then a result related to the error bound. The first
result, the proof of which follows straightforward from the definition, is
related to the monotonicity of the scheme (i.e. discrete maximum
principle). This amounts to a condition on the geometry of the mesh
associated with the matrix $D$.
\begin{lemma}\label{lemma:discr-max-princ}
The stiffness matrix corresponding to the bilinear form
(\ref{equation:discrete-problem}) is an $M$-matrix for any continuous
function $\bbeta$ if and only if the following inequality holds for
all edges $E$ in the triangulation
\begin{equation}
\sum_{T\supset\e} \omega_\e^T  \ge 0
\end{equation}
\end{lemma}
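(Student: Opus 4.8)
The plan is to show that the off-diagonal entries of the global stiffness matrix are nonpositive, since an $M$-matrix is precisely a matrix with nonpositive off-diagonal entries whose inverse is nonnegative, and the latter follows automatically here from the assumed solvability (or, more directly, from irreducible diagonal dominance of the assembled matrix together with the reaction and boundary contributions). First I would fix two distinct vertices $q_i$ and $q_j$ in the triangulation and compute the matrix entry $a_h(\basis{j},\basis{i})$, where $\basis{i},\basis{j}\in V_h$ are the nodal Lagrange basis functions. Only the elements $T$ containing the edge $\E=(q_i,q_j)$ contribute to the diffusion part of this entry; the reaction term $\int_\Omega\gamma\,\basis{j}\basis{i}\,dx$ and the boundary term are easily seen to be nonnegative contributions to the diagonal and to have the right sign off-diagonally under the standing sign assumptions, so the decisive part is $\sum_{T\supset\E} a_{h,T}(\basis{j},\basis{i})$.

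Next I would evaluate $a_{h,T}(\basis{j},\basis{i})$ from the definition \eqref{bilineara}. Because $\dedge[E']\basis{i}$ vanishes for every edge $E'$ of $T$ except $\E$ itself (for which it equals $\pm1$ depending on orientation), the sum over $\e'\subset\partial T$ collapses to the single edge $\E$, giving a term of the form $\omega^T_\e(D)\,\harmbeta\,\dedge(e^{\psi_\e}\basis{j})\,\dedge\basis{i}$. A short computation shows that $\dedge(e^{\psi_\e}\basis{j})\,\dedge\basis{i}$ equals $-\harmbeta^{-1}$ up to the orientation-dependent signs that cancel in pairs, so that $a_{h,T}(\basis{j},\basis{i}) = -\,\omega^T_\e(D)$, with the factors $\harmbeta$ and the exponentials $e^{\psi_\e}$ dropping out entirely. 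This is the key algebraic observation: the exponential fitting weights do not affect the sign of the off-diagonal entries, which is exactly why the monotonicity condition can be stated for \emph{any} continuous $\bbeta$.

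Summing over the elements sharing $\E$ then gives the off-diagonal entry $a_h(\basis{j},\basis{i}) = -\sum_{T\supset\E}\omega_\e^T(D)$ (plus the manifestly non-positive boundary contribution), so this entry is $\le 0$ for every edge if and only if $\sum_{T\supset\E}\omega_\e^T\ge0$ for every $E$, which is the asserted condition. For the converse direction one simply notes that if the inequality fails for some edge $\E$ then the corresponding off-diagonal entry is strictly positive for the choice $\bbeta\equiv0$, hence the matrix is not an $M$-matrix; this shows the condition is also necessary. Finally I would remark that once the off-diagonal entries are nonpositive, the $M$-matrix property follows from the diagonal dominance inherited from the consistency of the scheme (the row sums reproduce those of the underlying diffusion stiffness matrix, which is weakly diagonally dominant, together with the nonnegative reaction contribution) and irreducibility of the connectivity graph of the triangulation; alternatively it follows from the solvability hypothesis discussed in the next subsection.

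I expect the main obstacle to be the bookkeeping of orientations in the factor $\dedge(e^{\psi_\e}\basis{j})\,\dedge\basis{i}$: one must be careful that the arbitrary choice of directional vector $\tau_\e = q_i-q_j$ versus $q_j-q_i$ enters quadratically and therefore cancels, and that the harmonic-average factor $\harmbeta$ combines with $\dedge(e^{\psi_\e}\basis{j})$ to leave precisely $-1$, independently of $\bbeta$. Verifying this cancellation cleanly — ideally by evaluating $e^{\psi_\e(q_i)}$ and $e^{\psi_\e(q_j)}$ directly and using the definition \eqref{harm} of $\harmbeta$ together with $\basis{j}(q_i)=0$, $\basis{j}(q_j)=1$ — is the one spot where a careless sign would invert the whole criterion, so it is worth doing explicitly rather than waving at "the derivation."
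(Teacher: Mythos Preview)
Your plan matches the paper's, which says only that the lemma ``follows straightforward from the definition'' and gives no further details; computing the off-diagonal entries of the assembled matrix directly from \eqref{bilineara} is indeed the whole argument. However, the key computation you sketch is wrong in exactly the place you yourself flag as delicate.

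On the surviving edge $E=(q_i,q_j)$ one has $\dedge{\basis{i}}=1$ and $\dedge{(e^{\psi_\e}\basis{j})}=-e^{\psi_\e(q_j)}$, so
\[
a_{h,T}(\basis{j},\basis{i})\;=\;-\,\omega_\e^T(D)\,\harmbeta\,e^{\psi_\e(q_j)},
\]
not $-\omega_\e^T(D)$; the factors $\harmbeta$ and $e^{\psi_\e}$ do \emph{not} drop out, and $\dedge{(e^{\psi_\e}\basis{j})}\,\dedge{\basis{i}}\neq -\harmbeta^{-1}$ unless $\bbeta\cdot\tau_\e\equiv 0$ on $E$. What makes the argument work instead is that the strictly positive factor $\harmbeta\,e^{\psi_\e(q_j)}$ depends only on the edge $E$ (and on $\bbeta$), not on the particular element $T$, and therefore factors out of $\sum_{T\supset E}$: the off-diagonal entry of the principal part equals $-\harmbeta\,e^{\psi_\e(q_j)}\sum_{T\supset E}\omega_\e^T(D)$, whose sign is governed by $\sum_{T\supset E}\omega_\e^T(D)$ alone, for every continuous $\bbeta$. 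That is the actual reason the criterion is $\bbeta$-independent and gives both directions of the equivalence at once. (A smaller slip: $\delta_{E'}\basis{i}$ is nonzero for every edge $E'$ of $T$ incident to $q_i$, not only for $E$; it is the \emph{product} with $\delta_{E'}(e^{\psi_{E'}}\basis{j})$ that vanishes unless $E'=E$.) Finally, your remark about the lower-order terms has the signs reversed: $\int_\Omega\gamma\,\basis{i}\basis{j}\,dx\ge 0$ and $-\int_{\Gamma_N^{out}}\bb\cdot\undern\,\basis{i}\basis{j}\,ds\ge 0$ are \emph{positive} off-diagonal contributions and work against, not for, the $M$-matrix property; without mass-lumping (or $\gamma=0$, $\Gamma_N^{out}=\emptyset$) they would require a separate smallness assumption.
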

One may check out easily that if $D$ is a constant matrix and $d=2$
(two spatial dimensions) this is equivalent to the statement that the
triangulation is Delaunay in the metric introduced by $D$. Namely, 
instead of Euclidean inner product $\bb \cdot \undern$ of the vectors 
$\bb$ and $\undern$ in  $\Reals{d}$ we need to use the 
inner product $ D \bb \cdot \undern$ (recall that $D$ is a symmetric 
and positive definite matrix).
In this case the global stiffness matrix of the finite element system is 
nonsingular and therefore the scheme 
\eqref{equation:discrete-problem} has unique solution.
 
Next result is about solvability of the discrete problem for
sufficiently small characteristic mesh size $h$.  Let us first
consider an auxiliary discrete problem with $a(\cdot,\cdot)$ in place
of $a_h(\cdot,\cdot)$ in \eqref{equation:discrete-problem}.  The
latter problem is solvable, and a convincing (but not rigorous)
argument to prove this claim is that the convection term is one order
lower than the diffusion term and hence, decreasing $h$ will make the
diffusion term dominating and the problem weakly coercive. Some more 
detailed considerations and a rigorous arguments can be found in
Schatz~\cite{ASchatz_1974} or Xu~\cite{JXu_1994a}. 
\begin{lemma}\label{lemma:infsup}
For sufficiently small $h$ the following inf-sup condition holds
\begin{equation}\label{dis-infsup}
\sup_{v_h\in V_h}\frac{a_h(w_h,v_h)}{\|v_h\|_{1,\Omega}}\ge
c_1\|w_h\|_{1,\Omega} \quad\forall w_h\in V_h
\end{equation}
with a constant $c_1>0$ independent of mesh-size $h$.
\end{lemma}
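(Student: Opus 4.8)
The plan is to derive the discrete inf-sup condition \eqref{dis-infsup} from the continuous inf-sup condition \eqref{inf-sup} by a perturbation argument, using the consistency estimate in part~3 of Lemma~\ref{lemma:auxiliary} together with the solvability of the auxiliary Galerkin problem (the one with the exact bilinear form $a(\cdot,\cdot)$) quoted from Schatz~\cite{ASchatz_1974} and Xu~\cite{JXu_1994a}. The key point is that $a_h$ differs from $a$ only through the convection-diffusion flux term, and that difference is $O(h)$ in the appropriate norm on the finite element space.

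First I would record the intermediate fact that the auxiliary discrete bilinear form $a(\cdot,\cdot)$ restricted to $V_h\times V_h$ already satisfies a discrete inf-sup condition for $h$ small: this is exactly the content of the Schatz argument, which shows that a Galerkin subspace of a space on which the continuous problem is well posed (our assumption \eqref{inf-sup} plus boundedness of $a$) inherits a uniform inf-sup constant $c_2>0$ once the approximation properties of $V_h$ kick in, i.e. for $h\le h_0$. So for every $w_h\in V_h$ there is $v_h\in V_h$, $\|v_h\|_1=1$, with $a(w_h,v_h)\ge c_2\|w_h\|_1$. Then I would write
\begin{equation*}
a_h(w_h,v_h) = a(w_h,v_h) + \bigl(a_h(w_h,v_h)-a(w_h,v_h)\bigr),
\end{equation*}
and the task reduces to bounding the second term by $\tfrac{c_2}{2}\|w_h\|_1\|v_h\|_1$ for $h$ small enough.

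To estimate $a_h(w_h,v_h)-a(w_h,v_h)$ I would sum over $T\in\mathcal{T}_h$ the local differences. Comparing \eqref{bilinear-form} (whose principal part over $T$ is $a_T(w_h,v_h)=\int_T(D\nabla w_h+\bb w_h)\cdot\nabla v_h\,dx$; the reaction and boundary terms are common to $a$ and $a_h$ and cancel) with \eqref{bilineara}, each element contributes $a_T(w_h,v_h)-a_{h,T}(w_h,v_h)$. Since $w_h\in V_h$ is itself piecewise linear and continuous, $(w_h)_I=w_h$, so part~3 of Lemma~\ref{lemma:auxiliary} applies with $w=w_h$ provided $J(w_h)=\nabla w_h+\bbeta w_h\in[W^{1,p}(T)]^d$ — which holds element-wise under the standing smoothness assumptions on $\bbeta$. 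It yields
\begin{equation*}
|a_T(w_h,v_h)-a_{h,T}(w_h,v_h)| \le Ch\,|J(w_h)|_{1,p,T}\,\|v_h\|_{1,T}.
\end{equation*}
Summing, using $|J(w_h)|_{1,p,T}\le C(\|w_h\|_{1,p,T}+\ldots)$ on each element together with an inverse inequality to pass from the $W^{1,p}$ seminorm on $w_h$ (piecewise linear) back to $\|w_h\|_{1,T}$, and then Cauchy–Schwarz over the elements, gives $|a_h(w_h,v_h)-a(w_h,v_h)|\le Ch\,\|w_h\|_1\|v_h\|_1$. Choosing $h$ small enough that $Ch\le c_2/2$ and setting $c_1=c_2/2$ completes the argument.

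The main obstacle I expect is the step where one must control $|J(w_h)|_{1,p,T}$ — and more generally the whole local consistency estimate — uniformly in $h$ for a merely piecewise-linear $w_h$, because $J(w_h)$ involves $\bbeta w_h$ whose $W^{1,p}$ seminorm on $T$ need not be bounded by $\|w_h\|_{1,T}$ without an inverse estimate, and the inverse estimate injects an $h^{-1}$ that must be absorbed by the explicit $h$ in Lemma~\ref{lemma:auxiliary}~(3); keeping track that no further negative powers of $h$ sneak in (e.g. from the $L^p$-to-$L^2$ comparison of norms on shape-regular elements, which costs only fixed powers of $h$ that cancel against the scaling already accounted for) is the delicate bookkeeping. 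One should also double-check that the constant in the consistency bound is independent of $\bbeta$ — which is exactly what \eqref{equation:bound-hat} asserts — so that it does not degenerate in the convection-dominated regime; this is what makes the perturbation genuinely $O(h)$ rather than $O(h/\epsilon)$.
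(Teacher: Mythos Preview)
Your approach is essentially the paper's: write $a_h=a+(a_h-a)$, invoke the auxiliary Galerkin inf-sup for $a(\cdot,\cdot)$ on $V_h$ from Schatz, and bound the perturbation element-wise via item~3 of Lemma~\ref{lemma:auxiliary} applied with $w=w_h$ (using $(w_h)_I=w_h$). The paper handles your inverse-inequality worry by the observation $|w_h|_{2,T}=0$ for piecewise linear $w_h$, so that $|J(w_h)|_{1,p,T}=|\bbeta w_h|_{1,p,T}\le C\|\bbeta\|_{1,\infty,T}\|w_h\|_{1,T}$ with no second derivatives of $w_h$ ever appearing; this replaces your vaguer ``$\|w_h\|_{1,p,T}+\ldots$'' bound and makes the bookkeeping clean.

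One correction to your closing remark: although the constant $C$ in \eqref{item 3} is indeed independent of $\bbeta$, the factor $|J(w_h)|_{1,p,T}$ on the right is not, and the paper's final perturbation bound is $|a(w_h,v_h)-a_h(w_h,v_h)|\le Ch\,\max_T\|\bbeta\|_{1,\infty,T}\,\|w_h\|_{1,\Omega}\|v_h\|_{1,\Omega}$, with the threshold $h_0$ proportional to $1/\max_T\|\bbeta\|_{1,\infty,T}$. So the required smallness of $h$ is \emph{not} uniform in the convection-dominated regime; in the model case $D=\epsilon I$ one has $\bbeta=\epsilon^{-1}\bb$ and the lemma only kicks in once $h\lesssim\epsilon$.
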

\begin{proof}
As we have pointed out, when the original bilinear form
$a(\cdot,\cdot)$ is used in \eqref{equation:discrete-problem}, the
discrete problem is uniquely solvable (for sufficiently small
$h$). Hence, there exists a constant $c_2$ such that
\begin{equation}\label{infsup3}
\sup_{v_h\in V_h}\frac{a(w_h,v_h)}{\|v_h\|_{1,\Omega}}\ge
c_2 \|w_h\|_{1,\Omega},
\quad\forall w_h\in V_h.
\end{equation}
Let $v_h, w_h \in V_h$. Then obviously
$$
\ds a_h(w_h,v_h) = a(w_h,v_h) + \left[a_h(w_h,v_h)- a(w_h,v_h)\right].
$$ 
The first term is estimated using the condition (\ref{infsup3}).  
To estimate the second term we use \eqref{item 3} from
lemma~\ref{lemma:auxiliary}, sum up over all $T$ and apply the
Schwarz inequality to obtain that
$$
|a(w_h,v_h)-a_h(w_h,v_h)| \le Ch
\left\{
\sum_{T\in\mathcal{T}_h}
|J(w_h)|^2_{1,p,T}
\right\}^{1/2} \|v_h\|_{1,\Omega}.
$$ Observing that $|w_h|_{2,T}=0$ for any $w_h \in V_h$,
$T\in\mathcal{T}_h$ we get
$$
|J(w_h)|_{1,p,T} \le C
%  \| D\|_{1,\infty,T}+
\|\bbeta \|_{1,\infty,T} \|w_h\|_{1,T}.
$$
Summing over all the elements of the partition we have
\begin{equation}
|a(w_h,v_h)-a_h(w_h,v_h)| \leq
C\,h\, \max_{T\in\mathcal{T}_h}
\|\bbeta \|_{1,\infty,T} \|w_h\|_{1,\Omega}\|v_h\|_{1,\Omega},
\end{equation}
and for $h$ satisfying 
$$
\ds h \le h_0 \equiv C \left[
\max_{T\in\mathcal{T}_h}
\| \bbeta \|_{1,\infty,T}
\right]^{-1}
$$
the discrete problem has a unique solution. 
\end{proof}

As a consequence of Lemma~\ref{lemma:auxiliary},
Lemma~\ref{lemma:infsup} we get the following convergence result.
\begin{theorem}\label{theorem:the-theorem}
Let $u$ be the solution of the problem (\ref{equation:weak}).  Assume
that for all $T \in \mathcal{T}_h$, 
$D\in\left(W^{1,\infty} (T) \right)^{d\times d}$, 
$\bbeta \in [W^{1,\infty}(T)]^d$, $u \in W^{1,p}(T)$, $\gamma \in C(\overline T)$, and 
$J(u)\equiv\nabla u + \bbeta (x) u \in \left(W^{1,p} (T) \right)^d$, $p > d$.
%$\gamma \in C(\overline T)$. %$\gamma u \in $.
Then for sufficiently small $h$, the following estimate holds:
\begin{equation}\label{estgeneral}
\ds \|u_I-u_h\|_{1,\Omega} \le
C h
\left\{
\sum_{T\in\mathcal{T}_h}
|J(u)|^2_{1,p,T}+
\sum_{T\in\mathcal{T}_h}
%|\gamma 
|u|^2_{1,p,T}
\right\}^{1/2}
\end{equation}
\end{theorem}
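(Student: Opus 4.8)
\emph{Plan.} The argument follows the classical "discrete inf-sup plus consistency error" pattern, where the consistency error is exactly what item~3 of Lemma~\ref{lemma:auxiliary} was built to estimate. First I would invoke the discrete inf-sup bound \eqref{dis-infsup} of Lemma~\ref{lemma:infsup}, valid for $h$ small enough with a constant $c_1>0$ independent of $h$, and apply it with $w_h=u_I-u_h\in V_h$ (the interpolant $u_I$ is well defined and lies in $V_h$ because $u\in W^{1,p}$, $p>d$, embeds in $C(\overline\Omega)$ and $u|_{\Gamma_D}=0$). Since $u_h$ solves \eqref{equation:discrete-problem} and $u$ solves \eqref{equation:weak} with the admissible test function $v_h\in V_h\subset H^1_D(\Omega)$, one has $a_h(u_h,v_h)=F(v_h)=a(u,v_h)$, whence $a_h(u_I-u_h,v_h)=a_h(u_I,v_h)-a(u,v_h)$, and the theorem reduces to establishing
\[
|a_h(u_I,v_h)-a(u,v_h)|\le C h\,\Bigl\{\textstyle\sum_{T}|J(u)|_{1,p,T}^2+\sum_{T}|u|_{1,p,T}^2\Bigr\}^{1/2}\|v_h\|_{1,\Omega}
\]
for every $v_h\in V_h$, which plugged into \eqref{dis-infsup} gives \eqref{estgeneral}.

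Next I would split $a_h(u_I,v_h)-a(u,v_h)$, according to the definitions \eqref{bilinear} and \eqref{bilinear-form}, into three groups: the diffusion--convection part $\sum_{T}\bigl(a_{h,T}(u_I,v_h)-a_T(u,v_h)\bigr)$, the reaction part $\int_\Omega\gamma(u_I-u)v_h\,dx$, and the Neumann part $-\int_{\Gamma_N^{out}}\bb\cdot\undern\,(u_I-u)\,v_h\,ds$ (the edge sum over $E\subset\Gamma_N^{out}$ in $a_h$ reassembling the integral over $\Gamma_N^{out}$). The principal, first group is controlled termwise by item~3 of Lemma~\ref{lemma:auxiliary}, namely $|a_T(u,v_h)-a_{h,T}(u_I,v_h)|\le Ch\,|J(u)|_{1,p,T}\,\|v_h\|_{1,T}$; summing over $T$ and applying the Cauchy--Schwarz inequality gives $Ch\bigl(\sum_T|J(u)|_{1,p,T}^2\bigr)^{1/2}\|v_h\|_{1,\Omega}$. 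The reaction part is bounded by $\|\gamma\|_{0,\infty}\|u-u_I\|_{0,\Omega}\|v_h\|_{0,\Omega}$, where $\|u-u_I\|_{0,\Omega}\le Ch\bigl(\sum_T|u|_{1,p,T}^2\bigr)^{1/2}$ follows, for $p>d$, from the embedding $W^{1,p}(\widehat T)\hookrightarrow C(\overline{\widehat T})$, the Bramble--Hilbert lemma on $\widehat T$, and scaling back to $T$. The Neumann part is treated the same way, using in addition a trace inequality on the boundary simplices, and again contributes a term of the form $Ch\bigl(\sum_T|u|_{1,p,T}^2\bigr)^{1/2}\|v_h\|_{1,\Omega}$. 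Collecting the three bounds, dividing by $\|v_h\|_{1,\Omega}$, and taking the supremum over $v_h\in V_h$ yields \eqref{estgeneral}, with $C$ depending on $c_1$, on $\|\gamma\|_{0,\infty}$, $\|\bb\|_{0,\infty}$, $D$ and the shape-regularity constant, but not on $h$.

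The step I expect to require the most care is the bookkeeping of the two lower-order (reaction and Neumann) contributions under the weak regularity $u\in W^{1,p}$, $p>d$: the interpolation and trace estimates must be carried out in $W^{1,p}$ rather than in $H^2$, and one has to check that the resulting powers of $h$ collapse to the single factor $h$ appearing in \eqref{estgeneral}; the hypothesis $p>d$ is precisely what makes the needed Sobolev embedding and hence these estimates available. By contrast, the genuinely new ingredient---the ``variational crime'' of replacing $a(\cdot,\cdot)$ by the exponentially fitted form $a_h(\cdot,\cdot)$---has already been absorbed into Lemma~\ref{lemma:auxiliary}, and the coercivity on $V_h$ needed to close the argument is furnished by Lemma~\ref{lemma:infsup}, so those two pieces enter here only as quotations.
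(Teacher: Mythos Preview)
Your proposal is correct and is precisely the argument the paper intends: the paper gives no detailed proof of the theorem, stating only that it is ``a consequence of Lemma~\ref{lemma:auxiliary}, Lemma~\ref{lemma:infsup}'', and your Strang-type combination of the discrete inf--sup bound \eqref{dis-infsup} with the consistency estimate \eqref{item 3}, together with the routine interpolation/trace bounds for the lower-order reaction and Neumann terms, is exactly how these two lemmas are meant to be assembled.
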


\begin{remark}
There are also other possibilities for expressing the flux $J(u)$. 
For example, instead of the flux  \eqref{flux} one can write
$$
\begin{array}{rcl}
D(x)\nabla u+\bb u & = & \ds D(x) \alpha(x)^{-1}
\left(\alpha(x)\nabla(u) + \alpha(x) D^{-1}(x) \bb u)\right) \\[2ex]
  &:= & \ds \widetilde D(x)\left(\alpha(x)\nabla(u) + \bbeta u)\right),
%~~\bbeta=\alpha(x)D^{-1}(x)\bb.
\end{array}
$$ 
where 
$$~\bbeta= \alpha(x) D^{-1}(x) \bb, \quad \widetilde D(x)=D(x) \alpha(x)^{-1}$$
with  $\alpha(x)$ a suitable positive function (or a positive diagonal matrix).  
Then define
$$J(u)= \alpha(x)\nabla(u) + \bbeta u.$$ 
For such a choice of $J(u)$  the derivation and the analysis of an exponentially fitting 
scheme are essentially the same with some changes occurring
in the harmonic averages used to define the discrete problem. 

For example, one may choose 
$$
\alpha(x) = (\lambda_{min}(D(x)) + \lambda_{max}(D(x))/2,
$$ 
where $\lambda_{min}(D(x))$ and $\lambda_{max}(D(x))$ are 
the minimum and maximum eigenvalues of $D(x)$. 
For such choice $\widetilde D^{-1}=\alpha D^{-1}$ is better conditioned.
For example, problem \eqref{basic-problem}, \eqref{D-projection} with data such as
$k_{d}=0.0001$, $k_t = 21$, and $k_l=2.1$, used in \cite{carstensen_lazarov_tomov},
might require such modification.
\end{remark}

\begin{remark}
As we have pointed out in the introduction, in many cases $D(x)$
takes the form  \eqref{D-projection}. Then introducing the 
orthogonal projection $\pi_{\bb}=\bb \bb^t/|\bb|^2$
along the vector $\bb(x)$ we can rewrite $D(x)$ in the form
$$
%D(x)=\alpha_1 I + \alpha_2\pi_{\bb}+\alpha_3(I-\pi_{\bb}),
D(x)=k_d I + k_t |\bb| \pi_{\bb}+ k_l |\bb| (I-\pi_{\bb}).
$$ 
%where $\pi_{\bb} := \bb \bb^t/|\bb|^2$ is an orthogonal projection along $\bb$ and
%$\alpha_i$ depend on $\underv$, $k_{d}$, $k_t$ and $k_l$.  
Now one easily finds that
$
D^{-1}\bb= ( k_d + k_t |\bb|)^{-1}\bb,
$ 
%for a suitable constant $\sigma_0$. 
%In another word, 
i.e. the evaluation of $D^{-1} \bb$ 
is just a multiplication of $\bb$ by a scalar.
%does not need inversion of the matrix $D$.
\end{remark}

%%%%%%%%%%%%%%%%%%%%%%%%%%%%%%%%%%%%%%%%%%%%%%%%%%%%%%%%%%%%%%%%%%%%%%%%%%%
\vspace{0.3in}
\subsection*{Acknowledgment}
\noindent\par
This paper is dedicated to the 85-th birthday of Academician Alexander
Andreevich Samarskii -- a pioneer in numerical analysis and
computational mathematics and computational physics. Under his
longstanding leadership the Keldysh Institute of Applied Mathematics
at the Russian Academy of Sciences and the Department of Computational
Mathematics and Cybernetics at Moscow State University had played
fundamental role in establishing {\it Mathematical Modeling} as a
dynamic branch of contemporary mathematics.  We are pleased to
acknowledge the vision, the dedication, and the seminal contributions
of Acad. A.~A. Samarskii to this important research area, which has
become a the main link between science and engineering on the basis of
mathematics and computer information technologies.

%%%%%%%%%%%%%%%%%%%%%%%%%%%%%%%%%%%%%%%%%%%%%%%%%%%%%%%%%%%%%%%%

\bibliographystyle{plain}
\bibliography{EAFE-general}
\end{document}